\newtheorem{prop}{Proposition}
\newtheorem{theorem}[prop]{Theorem}
\newtheorem{lemma}[prop]{Lemma}
\begin{document}
\title{Cookie Monster Devours Naccis}
\author{Leigh Marie Braswell\\Phillips Exeter Academy \and Tanya Khovanova\\MIT}
\maketitle

\begin{abstract}
In 2002, Cookie Monster appeared in \textit{The Inquisitive Problem Solver}. The hungry monster wants to empty a set of jars filled with various numbers of cookies. On each of his moves, he may choose any subset of jars and take the same number of cookies from each of those jars. 

The \textit{Cookie Monster number} is the minimum number of moves Cookie Monster must use to empty all of the jars. This number depends on the initial distribution of cookies in the jars.

We discuss bounds of the Cookie Monster number and explicitly find the Cookie Monster number for Fibonacci, Tribonacci and other nacci sequences.
\end{abstract}

\section{Cookie Monster wants cookies}

Cookie Monster's Mommy has a set of $k$ jars filled with  cookies. In one move she allows Cookie Monster to choose any subset of jars and take (or devour immediately) the same number of cookies from each of those jars. Cookie Monster always wants to empty all of the jars in as few moves as possible. 

The first time Cookie Monster's Mommy set up this system, she made the mistake of putting the same number of cookies in each of the jars. Unfortunately, Cookie Monster finished emptying the jars in one move. Since then, his Mommy has became smarter and always puts distinct numbers of cookies in the jars.

Cookie Monster learned in kindergarten that it is a good idea to give notation to numbers. So he decided to denote the set of $k$ jars with $j_{1} < j_{2} < \cdots < j_{k}$ cookies as a set $S = \{j_{1}, j_{2}, \ldots, j_{k} \}$.

The next time Cookie Monster's Mommy filled the jars, she put 1 cookie in the first jar, 2 cookies in the second jar and so on: $i$ cookies in the $i$-th jar. She was hoping that because the number of cookies is different in each jar, Cookie Monster would empty one jar at a time, and it would take him $k$ moves to empty all the jars. Little did she know how smart her son was. Though she filled 7 jars with cookies: $S=\{1,2,3,4,5,6,7\}$, Cookie Monster emptied all the jars not in 7, but rather in 3 moves. On the first move Cookie Monster took 4 cookies from all of the  jars he could. That is, he took 4 cookies from the 4 jars with four or more cookies and consumed a total of 16 cookies. After that, the set of numbers of cookies in jars became $\{1, 2, 3,0,1,2,3 \}$. On his next move he took 2 cookies from the jars with two or more cookies, so the set of numbers of cookies became $\{1, 0, 1,0,1,0,1 \}$. 

Cookie Monster noticed that when the numbers of cookies in two jars are the same, he can treat the jars in the same way. So for the purposes of inventing his strategy, he only needs to know the set of numbers of distinct cookies in jars. When describing the number of cookies in jars, he can ignore duplicates and empty jars. Cookie Monster also noticed that it is good to reduce the number of cookies in any jar to a number of cookies in a jar that he didn't touch. For the purposes of getting rid of the jars, it is as good as emptying the jar. On the second thought, it is not as good as emptying the jar, since he gets to eat fewer cookies from this jar on that particular move. Anyway, he decided to call a jar that is emptied or reduced to an existing number a \textit{discarded} jar. So in his brilliant strategy above, on the first move he discarded 4 jars and reduced the set of cookie numbers to $\{1, 2, 3\}$. On the second move he discarded two more numbers from the set and reduced it to just $\{1\}$.

\section{Cookie Monster number}

After spending so much time thinking about it, Cookie Monster decided that he needs a name for the optimal number of moves for a particular set. The minimum number of moves Cookie Monster must use to empty all of the jars represented by the set $S$ is now called the \emph{Cookie Monster number of $S$}, or in short, $\text{CM}(S)$.

He was also worried about the last time: was there a way to empty the jars in less than three moves? 

He was very gifted and finally came up with the theorem:

\begin{theorem}
Cookie Monster needs at least $\lceil \log_2(k+1)\rceil $ steps to empty any $k$ distinct jars. 
\end{theorem}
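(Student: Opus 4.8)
The plan is to encode each jar by the \emph{pattern} of moves that touch it, and then count patterns. Suppose Cookie Monster empties the $k$ jars in $m$ moves, where on move $t$ he removes $a_t$ cookies from some chosen subset of jars. First I would reduce to the case $a_t \geq 1$ for every $t$: a move that removes $0$ cookies changes nothing and can be deleted from the strategy, which only decreases $m$, so it suffices to prove the bound when every amount is positive.

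Next, to each jar $i$ holding $j_i$ cookies, associate the set $T_i \subseteq \{1, 2, \ldots, m\}$ of moves on which cookies were removed from that jar. Since the jar is emptied, the amounts removed from it sum to its initial contents, i.e.\ $j_i = \sum_{t \in T_i} a_t$. I would then make two observations. First, $T_i \neq \emptyset$, because $j_i \geq 1 > 0$. Second, $T_i \neq T_{i'}$ whenever $i \neq i'$: if two jars were touched on exactly the same moves, then $j_i = \sum_{t \in T_i} a_t = \sum_{t \in T_{i'}} a_t = j_{i'}$, contradicting the hypothesis that the jar counts are distinct. Hence $i \mapsto T_i$ is an injection from the $k$ jars into the nonempty subsets of an $m$-element set, of which there are $2^m - 1$. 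Therefore $k \leq 2^m - 1$, equivalently $2^m \geq k+1$, and since $m$ is a nonnegative integer this forces $m \geq \lceil \log_2(k+1) \rceil$.

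I do not anticipate a real obstacle; the one point to be careful about is that a lower bound argument must not quietly use the in-process constraint that no jar ever goes negative. We only need the single necessary condition that the total removed from jar $i$ equals $j_i$, which any emptying strategy satisfies; ignoring the intermediate constraints can only make the counting bound weaker, which is fine for a lower bound. Finally, it is worth remarking that the bound is tight: for $S = \{1, 2, \ldots, 2^m - 1\}$, choosing $a_t = 2^{t-1}$ and selecting on move $t$ exactly those jars whose current count has a $1$ in bit $t-1$ empties all jars in $m = \lceil \log_2(k+1)\rceil$ moves, so the logarithm cannot be improved in general.
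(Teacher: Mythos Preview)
Your argument is correct, and it follows a genuinely different route from the paper's. The paper proves the bound dynamically: it shows that a single move can at most roughly halve the number of distinct nonempty jars, via the inequality $k \le 2f(k)+1$ where $f(k)$ is the number of distinct nonempty jars after the move (at most one jar is emptied, and among the survivors no value can occur three times since touched and untouched jars each retain distinct values); iterating gives $2^m \ge k+1$. Your approach is static: you encode each jar by the set of moves that touch it and observe that this is an injection into the $2^m-1$ nonempty subsets of $\{1,\dots,m\}$. The encoding argument is slicker and immediately suggests the matching construction you mention; the halving argument, on the other hand, makes visible exactly how fast the number of distinct jars can shrink per move, which is the intuition the paper wants to convey. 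Your care about not relying on the ``no jar goes negative'' constraint is well placed and exactly right: you only use the necessary condition $j_i=\sum_{t\in T_i}a_t$.
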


\begin{proof}
Suppose that $k$ jars contain distinct numbers of cookies, and let $f(k)$ be the number of distinct non-empty jars after the first move of Cookie Monster. He claims that $k \leq 2f(k)+1$. Indeed, after the first move, there will be at least $k-1$ non-empty jars, but there cannot be three identical non-empty jars. That means, the number of jars plus 1 can't decrease faster than twice each time.
\end{proof}

Cookie Monster's Mommy was pleasantly surprised to see how smart her son was, but she was worried that he finished the jars too fast. For the next time, she decided to think things through. If she has 7 jars, is there a way to place the cookies so that Cookie Monster would need 7 moves?
 She decided to place $2^i$ cookies in the $i$-th jar: $j_i = 2^i$.

Cookie Monster tried very hard to think of a strategy to empty the $k$ jars in less than $k$ moves. He even invented the following lemma:

\begin{lemma}\label{thm:commutativity}
Cookie Monster may perform his optimal moves in any order.
\end{lemma}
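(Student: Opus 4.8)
The plan is to record a strategy not as a greedy recipe (``remove as much as possible from every jar that can afford it'') but as a concrete list of moves, where a single \emph{move} is a pair $(a,T)$ consisting of a positive integer $a$ and a set $T$ of jars, with the meaning ``subtract $a$ cookies from every jar in $T$.'' A complete strategy for $S=\{j_1,\dots,j_k\}$ is then a finite list $M_1=(a_1,T_1),\dots,M_m=(a_m,T_m)$ that leaves every jar empty, and $\mathrm{CM}(S)$ is the least $m$ for which such a list exists. First I would note that we may assume every jar is genuinely emptied: Cookie Monster's notion of \emph{discarding} a jar by lowering it to a value already present is only a bookkeeping device, and any strategy realizing $\mathrm{CM}(S)$ can be written as a list of literal subtractions whose restriction to each physical jar consists of entries summing to that jar's initial content.

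The key quantitative observation is a conservation law: if $M_1,\dots,M_m$ empties every jar, then for each jar $i$ the sum of the amounts $a_t$ over exactly those moves $M_t=(a_t,T_t)$ with $i\in T_t$ equals $j_i$. From here commutativity is immediate. Fix any permutation $\pi$ of $\{1,\dots,m\}$ and apply the moves in the order $M_{\pi(1)},\dots,M_{\pi(m)}$. For any $s$, the total removed from jar $i$ through the end of step $s$ is the sum of $a_t$ over those $t\in\{\pi(1),\dots,\pi(s)\}$ with $i\in T_t$; being a sub-sum of non-negative terms whose total is $j_i$, it is at most $j_i$. Taking $s-1$ shows jar $i$ is non-negative going into step $s$, and taking $s$ (when $i\in T_{\pi(s)}$) shows jar $i$ holds at least $a_{\pi(s)}$ cookies at that moment, so step $s$ is legal. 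After all $m$ steps jar $i$ has had exactly $j_i$ cookies removed, so it is empty. Hence the permuted list is again a valid strategy using the same number of moves; applied to an optimal strategy, its moves may be performed in any order.

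The only place needing care — and the only real obstacle — is the translation in the first paragraph: the greedy descriptions Cookie Monster actually uses are \emph{not} order-independent, since ``every jar that can currently afford $a$'' depends on the current state, which changes when moves are reshuffled. So the lemma must be stated and proved at the level of the underlying fixed moves $(a_t,T_t)$, with the understanding that reordering keeps each set $T_t$ attached to its amount $a_t$ rather than recomputing it. Once this is pinned down, the conservation law and the non-negativity estimate above complete the proof with no further work; optimality is used only at the very end, to conclude that the reordered list is again optimal because it has the same length.
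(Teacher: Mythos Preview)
Your argument is correct. The conservation law---that for each jar $i$ the amounts $a_t$ summed over moves with $i\in T_t$ total exactly $j_i$---together with the observation that every partial sub-sum is bounded by the full sum, cleanly establishes both legality of each reordered step and emptiness at the end. Your care in distinguishing fixed moves $(a_t,T_t)$ from state-dependent greedy rules is exactly the point that needs to be made.

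As for comparison with the paper: there is nothing to compare. The paper states this lemma without proof and immediately moves on to the next theorem. Your write-up therefore supplies what the paper omits, and does so by the natural route; the only stylistic remark is that the first paragraph's discussion of ``discarding'' versus literal emptying, while correct, is not strictly needed for the lemma itself---any valid strategy is already a list of literal subtractions, and the commutativity argument applies to it directly.
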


Finally he understood. Here is his new theorem:

\begin{theorem}
If the number of cookies in the $i$-th jar is greater than the total number of cookies in jars 1 through $i-1$ for any $i$, there is no way to empty all the jars faster than one by one.
\end{theorem}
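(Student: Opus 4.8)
The plan is to establish the lower bound $\mathrm{CM}(S)\ge k$ by induction on $k$; the matching upper bound is trivial, since Cookie Monster can always empty a single jar on each move. It is convenient to argue with an \emph{abstract strategy}: a list of $m$ moves, where move $t$ consists of a positive amount $v_t$ together with the set of jars it touches, subject only to the requirement that for each jar $i$ the amounts of the moves touching it sum to $j_i$. Writing $A_i\subseteq\{1,\dots,m\}$ for the set of moves touching jar $i$, this says $j_i=\sum_{t\in A_i}v_t$. Any actual play of Cookie Monster yields such data with the same number of moves, and we may discard moves that take $0$ cookies or touch no jar, so it suffices to show every abstract strategy has $m\ge k$. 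Note each $A_i$ is nonempty because $j_i>0$.

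The heart of the argument is to show that, under the superincreasing hypothesis, some move touches \emph{only} the largest jar, jar $k$. For each move $t$ let $d_t$ be the number of jars among $1,\dots,k-1$ that move $t$ touches, so that $\sum_{i=1}^{k-1}j_i=\sum_{t=1}^{m}v_t d_t$. Since $j_k>\sum_{i=1}^{k-1}j_i$,
\[
\sum_{t\in A_k}v_t \;=\; j_k \;>\; \sum_{t=1}^{m}v_t d_t \;\ge\; \sum_{t\in A_k}v_t d_t ,
\]
so $\sum_{t\in A_k}v_t(1-d_t)>0$; as every $v_t>0$, some move $t_k\in A_k$ has $d_{t_k}<1$, i.e.\ $d_{t_k}=0$. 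Because jar $k$ is the last jar, $t_k$ touches jar $k$ and no other jar at all.

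Finally I would remove jar $k$ together with move $t_k$ (discarding along the way any move left touching nothing). Restricting the surviving moves to jars $1,\dots,k-1$ yields an abstract strategy for $\{j_1,\dots,j_{k-1}\}$ — each such jar $i$ still receives exactly $\sum_{t\in A_i}v_t=j_i$, since no move touching jar $i$ was discarded — and this strategy uses at most $m-1$ moves. The truncated set $\{j_1,\dots,j_{k-1}\}$ still satisfies the hypothesis, so by the inductive hypothesis it needs at least $k-1$ moves; hence $m-1\ge k-1$, i.e.\ $m\ge k$. The base case $k=1$ holds since $j_1>0$ forces at least one move. I expect the only genuine difficulty to be the displayed inequality — recognizing that the superincreasing condition is precisely what guarantees a ``private'' move for the top jar; once that is in hand the induction is routine, and in particular Lemma~\ref{thm:commutativity} is not needed for this argument.
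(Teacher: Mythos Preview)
Your argument is correct and follows the same skeleton as the paper's: show that some move touches only the largest jar, peel that move and jar off, and induct. The paper's version is terser and, after asserting the existence of such a private move, invokes Lemma~\ref{thm:commutativity} to reorder it to the front and replace it by ``take all of $j_k$''; your abstract-strategy formulation sidesteps reordering and nonnegativity issues entirely by working directly with the data $(v_t,\text{support})$, and your displayed inequality $\sum_{t\in A_k}v_t(1-d_t)>0$ makes rigorous what the paper only asserts (``the largest jar has more than the rest combined, therefore some step touches only it''). The two routes buy the same conclusion, but yours is self-contained and does not rely on Lemma~\ref{thm:commutativity}, exactly as you note.
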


\begin{proof}
The largest jar has more than the total number of cookies in all of the other jars combined. Therefore, any strategy has to include a step in which Cookie Monster only takes cookies from the largest jar. He will not jeopardize the strategy if he takes all the cookies from the largest jar in the first move. Applying the induction process, Cookie Monster sadly sees that he needs $k$ steps for $k$ jars.
\end{proof}

Yes, Cookie Monster remembers that $2^i > 1+2+\dots +2^{i-1}$. His mother didn't give him a chance to empty the jars faster than the boring one by one strategy does! He starts crying. His mother comforts him and promises in the future to place the cookies in such a way that he can always empty the jars faster than the number of jars.

\section{Yummy Fibonacci sequence}

The next time she decides to challenge her son, Cookie Monster's Mommy remembered that Cookie Monster likes the Fibonacci sequence. The Fibonacci sequence is defined as $F_{0} = 0$, $F_{1} = 1$, and $F_{i} = F_{i-2} + F_{i-1}$ for $i \geq 2$. She didn't want to have an empty jar and two jars with the same number of cookies, so she decided that the smallest jar will contain $F_2$ cookies, and so on: the $i$-th jar contains $F_{i+1}$ cookies and the set of $k$ jars contains $\{ F_2, F_3, \ldots, F_{k+1}\}$ cookies.

On his first move Cookie Monster took $F_{k}$ cookies from jars numbered $k-1$ and $k$. He emptied the jar $k-1$. And the $k$-th jar now has $F_{k+1} - F_k = F_{k-1}$ cookies, which is the same number of cookies that jar $F_{k-2}$ has. Thus, he discarded two jars in one move. So he can proceed like this and empty the jars in $k/2$ moves. Hooray! 

Wait a minute! Something is off. You need to be careful with cookies nowadays. If $k$ is odd, $k/2$ is not an integer, so Cookie Monster needs an extra move to empty the last jar, and the total number of moves is $(k+1)/2$. If $k$ is even, he still can't empty the jars in $k/2$ moves, because he can't empty the last two jars in one move. He needs one more move, so the answer is $k/2+1$. He can combine both answers for any $k$ to be: $\lceil (k+1)/2 \rceil$.

Cookie Monster was happy that he finished all the jars in almost half the time. But then he started wondering. What if there was a faster way? He needed to prove for himself that there was not.

\begin{theorem}
For $k$ jars with the set of cookies $S = \{F_{2}, \dots, F_{k+1}\}$, the Cookie Monster number is $\text{CM}(S) = \lceil \frac{k+1}{2} \rceil$.
\end{theorem}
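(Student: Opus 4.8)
\medskip
\noindent\textbf{Proof proposal.}

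The upper bound $\text{CM}(S)\le\lceil\frac{k+1}{2}\rceil$ is already established by the explicit pairing strategy described just above, so the whole task is the matching lower bound. My plan is first to repackage the problem so as to forget the intermediate ``discard'' bookkeeping. By Lemma~\ref{thm:commutativity} the order of moves is irrelevant, and from this one checks that $\text{CM}(S)$ equals the smallest $m$ for which there is a multiset of positive integers $d_1,\dots,d_m$ such that every element of $S$ is a subset sum of $\{d_1,\dots,d_m\}$: given an optimal $m$-move strategy, let $d_t$ be the number of cookies removed on move $t$, and note that a jar starting with $v$ cookies gets emptied exactly when $v=\sum_{t\in A}d_t$ for the set $A$ of moves touching it; conversely, any such multiset yields a valid $m$-move strategy, since when the moves are performed in order, the amount in a jar just before it is hit is always at least the amount about to be removed. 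So it suffices to prove: if each of $F_2,F_3,\dots,F_{k+1}$ is a subset sum of a multiset of $m$ positive integers, then $m\ge\lceil\frac{k+1}{2}\rceil$.

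I would prove this by strong induction on $k$, the cases $k=1,2$ being immediate (a single weight has only the two subset sums $0$ and $d_1$). For the inductive step, take a multiset $D$ of positive integers with $|D|=m$ whose subset sums include $F_2,\dots,F_{k+1}$. The heart of the matter is the claim that if $D'$ is obtained from $D$ by deleting one copy of its largest element $d_{\max}$, then the subset sums of $D'$ still include $F_2,\dots,F_{k-1}$. Granting this, the induction hypothesis applied to the $(k-2)$-jar Fibonacci set gives $|D'|\ge\lceil\frac{k-1}{2}\rceil$, whence $m=|D'|+1\ge\lceil\frac{k-1}{2}\rceil+1=\lceil\frac{k+1}{2}\rceil$, exactly as wanted. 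The intuition for the claim is that only the two largest targets $F_k,F_{k+1}$ really need the largest weight: when $d_{\max}\ge F_{k-1}$ it is too large to appear in any representation of an $F_j$ with $j\le k-2$, so those survive the deletion automatically, and $F_{k-1}$ gets a short separate treatment; and when every weight is smaller than $F_{k-1}$, the need to represent the large target $F_{k+1}$ forces enough repetition among the weights that a copy of $d_{\max}$ can again be spared. Making this rigorous across the intermediate range is where the work lies.

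The main obstacle is exactly this peeling claim: pinning down that deleting a copy of the largest weight costs at most the two largest Fibonacci targets. I expect the decisive tool to be Zeckendorf's theorem --- uniqueness of an integer's representation as a sum of nonconsecutive Fibonacci numbers --- since it makes the gaps between attainable subset sums rigid enough to stop a single weight from being essential to three of the Fibonacci targets at once. As an alternative one could try to work directly with jars and moves and bound by $2$ the number of jars a single move can discard, but the collections of jar-values reachable from the Fibonacci set are cumbersome to characterize, so I would expect that line to be messier than the subset-sum reformulation above.
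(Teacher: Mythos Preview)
Your subset-sum reformulation and the inductive scheme of peeling off one weight are exactly right, and in fact this is the skeleton of the paper's argument as well. The gap is your peeling claim: you want to delete a copy of $d_{\max}$ and retain representations of $F_2,\dots,F_{k-1}$, but you never prove this, and your case split leaves real work in both branches. The case $d_{\max}=F_{k-1}$ is not ``short'': the only representation of $F_{k-1}$ could a priori be the deleted singleton, and you then have to manufacture another one from the remaining weights. Invoking Zeckendorf is a red flag---nothing about the problem forces the $d_i$ themselves to be Fibonacci numbers, so Zeckendorf has no obvious grip on them.

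The paper closes the gap with a simpler choice of which weight to peel. Fix once and for all subsets $I_2,\dots,I_{k+1}\subseteq\{1,\dots,m\}$ with $\sum_{i\in I_j}d_i=F_j$. From the identity $\sum_{j=1}^{k-1}F_j=F_{k+1}-1$ one gets $F_{k+1}>\sum_{j=2}^{k-1}F_j$; hence if $I_{k+1}\subseteq I_2\cup\cdots\cup I_{k-1}$ then
\[
F_{k+1}=\sum_{i\in I_{k+1}}d_i\;\le\;\sum_{i\in I_2\cup\cdots\cup I_{k-1}}d_i\;\le\;\sum_{j=2}^{k-1}\sum_{i\in I_j}d_i\;=\;\sum_{j=2}^{k-1}F_j\;<\;F_{k+1},
\]
a contradiction. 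So some index $i^\ast\in I_{k+1}$ lies outside every $I_j$ with $2\le j\le k-1$. Delete $d_{i^\ast}$ rather than $d_{\max}$: by construction the chosen representations of $F_2,\dots,F_{k-1}$ survive untouched, and your induction goes through with no case analysis and no Zeckendorf. In the jar language this is exactly the paper's one-line observation that ``there must be a move that touches the $k$-th jar and doesn't involve the first $k-2$ jars.''
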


\begin{proof}
Cookie Monster remembered that one of the most famous identities for the Fibonacci numbers is the following: $F_{k+1} -1 = \Sigma_{i=1}^{k-1}F_i$. That means that there must be a move that touches the $k$-th jar and doesn't involve the first $k-2$ jars. He can't be less optimal if he starts with this move and discards both the largest and the second largest jars. That means his strategy must be optimal, or, more precisely, it must be one of the optimal strategies.
\end{proof}

\section{Tribonacci}

Cookie Monster's Mommy decides to find a more difficult sequence. She googled around and discovered Tribonacci numbers. Tribonacci numbers are similar to Fibonacci numbers, but the next term is the sum of the previous three terms rather than the previous two terms. Specifically, you start with $T_0 = 0$, $T_1 = 0$, and $T_2=1$, then use the recursion: $T_k = T_{k-1} + T_{k-2} + T_{k-3}$. You get the following sequence: 0, 0, 1, 1, 2, 4, 7, 13, 24, 44, $\ldots$. To avoid empty jars and duplicates, Cookie Monster's Mommy puts the following set in her $k$ jars: $S = \{T_3, T_4, \ldots, T_{k+2} \}$.

The main property of the Fibonacci sequence is that the next term is the sum of previous terms. Cookie Monster is smart and he uses exactly this fact to make a strategy for emptying jars with Tribonaccis. He thinks back to the jars containing $F_2, F_3, \ldots, F_{k+1}$ cookies. He remembers taking $F_{k}$ cookies from the two largest jars; in doing so, he emptied the jar with $F_{k}$ cookies and reduced the number of cookies in the jar with $F_{k+1}$ cookies to $F_{k-1}$ cookies, discarding both jars. How can he extend this method to Tribonaccis? Can he discard two jars in one move? Not so fast. If he takes $T_{k+1}$ cookies from the two largest jars, he empties the second largest jar, but the number of cookies in the largest jar is $T_{k+2} - T_{k+1}$: still too big to equal a number in another jar. But --- AHA --- he can also take $T_{k}$ cookies from the third largest and the largest jars, emptying the third largest jar and reducing the largest jar to $T_{k+2} - T_{k+1} - T_k = T_{k-1}$. Now the largest jar is discarded, as the number of cookies left there is the same as the number of cookies in another jar. 

So Cookie Monster can empty three jars in two moves, which means that he empties all the jars in about $2k/3$ moves. Now he remembers to check the endgame to get a precise number. If $k$ has remainder 1 modulo 3, he needs one more move for the last jar: $2\lfloor k/3 \rfloor + 1$. If $k$ has remainder 2 modulo 3, he needs two more moves: $2\lfloor k/3 \rfloor + 2$. If $k$ has remainder 0 modulo 3, he needs three moves for the last group of three: $2\lfloor k/3 \rfloor + 1$. He can combine his calculations into the formula for any $k$: $\lfloor 2k/3 \rfloor + 1$. Now what is left is to prove is that this is indeed his famous number.

\begin{theorem}
When $k$ jars contain a set of Tribonacci numbers $S = \{T_{3}, \dots, T_{k+2}\}$, then $\text{CM}(S) = \lfloor \frac{2k}{3} \rfloor + 1$.
\end{theorem}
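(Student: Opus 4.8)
The upper bound $\text{CM}(S)\le\lfloor 2k/3\rfloor+1$ is already witnessed by the explicit strategy described just above (two moves per block of three jars, plus the short endgame adjustment), so the whole content of the proof is the matching lower bound $\text{CM}(S)\ge\lfloor 2k/3\rfloor+1$. The plan is to prove this by strong induction on $k$ in steps of three: the inductive step will peel the three largest jars off in two moves and land on the Tribonacci instance $S'=\{T_3,\dots,T_{k-1}\}$ for $k-3$ jars, and since $\lfloor 2(k-3)/3\rfloor+1=\lfloor 2k/3\rfloor-1$, the two peeled moves together with the inductive bound give exactly $\lfloor 2k/3\rfloor+1$. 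The base cases $k=1,2,3$ (values $1,2,3$) are checked directly, the only nontrivial one being $k=3$, where $\text{CM}(\{1,2,4\})\ge\lceil\log_2 4\rceil=2$ and a one-line case analysis of the single possible first move upgrades this to $3$.

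The engine of the inductive step is an observation in the spirit of the super-increasing theorem above: if, in the current configuration, the largest jar contains more cookies than all the jars other than the two largest combined, then every strategy that empties the jars must contain a move that touches the largest jar but none of those smaller jars. Indeed, write $v$ for the contents of the largest jar and $B$ for the total contents of all jars except the two largest, with $v>B$; over the whole strategy exactly $v$ cookies leave the largest jar, while any move that touches the largest jar and also some jar outside the top two removes from those smaller jars at least as many cookies as it removes from the largest, so such ``mixed'' moves can account for at most $B<v$ of the cookies taken from the largest jar, forcing at least one move to be supported on the two largest jars alone; by the commutativity lemma we may put it first. To feed this observation I would use the Tribonacci identity $\sum_{j=0}^{n}T_j=\tfrac12(T_{n+2}+T_n-1)$, which yields $\sum_{j=3}^{k}T_j=\tfrac12(T_{k+2}+T_k-3)<T_{k+2}$, so the hypothesis holds for $S$ and produces a canonical first move (take $T_{k+1}$ from the two largest jars, emptying the second-largest and leaving $T_{k+2}-T_{k+1}=T_k+T_{k-1}$ in the largest). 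One then checks $T_k+T_{k-1}>\tfrac12(T_{k+1}+T_{k-1}-3)=\sum_{j=3}^{k-1}T_j$, which reduces to $T_k+3>T_{k-2}$, so the hypothesis holds again and yields a canonical second move (take $T_k$ from the largest jar and from the jar still holding $T_k$, emptying the latter and leaving $T_{k+2}-T_{k+1}-T_k=T_{k-1}$, a value already present in jar $k-3$). After these two moves the surviving distinct values are exactly $T_3,\dots,T_{k-1}$, and the induction closes.

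The step I expect to be the main obstacle is the passage from ``some move is supported on the two largest jars and can be done first'' to ``that move may be taken to be exactly the canonical one'', and likewise for the second move: a priori Cookie Monster could instead take a different amount, or touch only the largest jar, and one must rule out that any such variant empties the jars sooner. The clean way to deal with this is to reformulate the quantity being computed: using the commutativity lemma one checks that $\text{CM}(S)$ equals the least size of a multiset $C$ of positive integers such that every element of $S$ is a sum of a sub-multiset of $C$ (a legal ordering of the corresponding moves always exists, since partial sums never exceed the target). In this language the theorem becomes a purely additive statement, and the lower bound reads: any multiset $C$ for which each $T_i$ with $3\le i\le k+2$ is a sub-multiset sum must satisfy $|C|\ge\lfloor 2k/3\rfloor+1$. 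The inductive step then amounts to showing that two elements can be removed from $C$ so that the remaining multiset still represents every $T_i$ with $3\le i\le k-1$; locating those two elements --- exploiting that $T_k,T_{k+1},T_{k+2}$ are all sub-sums while $\sum_{j=3}^{k-1}T_j$ is comparatively tiny, and handling multisets $C$ arranged so that no single element is individually removable --- is where the real work, and the remaining Tribonacci inequalities, will go.
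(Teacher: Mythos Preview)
Your inductive skeleton (peel off the three largest jars in two moves, recurse on $\{T_3,\dots,T_{k-1}\}$) and your sum identity match the paper's approach and its pair of inequalities. The difference is in how the inductive step is closed, and this is exactly where you flag an obstacle and leave the argument unfinished.

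The paper avoids your obstacle entirely by partitioning the moves differently. Instead of isolating a move supported on the top \emph{two} jars and then trying to force it to be the canonical one, the paper separates moves into those confined to the top \emph{three} jars $\{T_k,T_{k+1},T_{k+2}\}$ and those touching at least one of the bottom $k-3$ jars $\{T_3,\dots,T_{k-1}\}$. Any move that touches a bottom jar removes from $T_{k+2}$ (or from $T_{k+1}$) at most as many cookies as it removes from the bottom jars collectively, so the total contribution of non-confined moves to either of the two largest jars is at most $\sum_{i=3}^{k-1}T_i$. The first inequality $T_{k+1}>\sum_{i\le k-1}T_i$ then forces at least one confined move, and the second inequality $T_{k+2}-T_{k+1}>\sum_{i\le k-1}T_i$ rules out the possibility that a \emph{single} confined move (which can take at most $T_{k+1}$ from the largest jar if it also touches the second largest, and cannot cover $T_{k+1}$ at all if it does not) suffices. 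Hence there are at least two confined moves; the remaining moves, by themselves, empty the bottom $k-3$ jars, so induction gives at least $\lfloor 2(k-3)/3\rfloor+1$ of them, and the bound follows. No canonical amounts, no multiset reformulation, no second iteration on a configuration that depends on an unknown first move: counting confined-to-top-three moves is the missing idea that dissolves the difficulty you identified.
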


Where to start this proof? Cookie Monster remembers that he used inequalities and identities to prove previous theorems. The Tribonacci sequence grows faster than the Fibonacci sequence, so the following must be true: $T_{k+1} > \Sigma_{i=1}^{k-1}T_i$. But this might not be enough. Cookie Monster looked into the sky and came up with the second identity he needed:

\begin{lemma}
The Tribonacci sequence satisfies the inequalities:
\begin{itemize}
\item $T_{k+1} > \Sigma_{i=1}^{k-1}T_i$
\item $T_{k+2} - T_{k+1} > \Sigma_{i=1}^{k-1}T_i$.
\end{itemize}
\end{lemma}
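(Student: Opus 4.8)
The plan is to reduce both inequalities to the elementary fact that the Tribonacci sequence is non-negative and non-decreasing, which itself is a one-line induction from $T_0=T_1=0$, $T_2=1$ and the recursion $T_k=T_{k-1}+T_{k-2}+T_{k-3}$. Rather than evaluate $\sum_{i=1}^{k-1}T_i$ in closed form, I would track how the slack in the first inequality changes as $k$ grows.

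For the first bullet, set $g(k)=T_{k+1}-\sum_{i=1}^{k-1}T_i$, so that the claim is $g(k)>0$. Taking a difference and using the recursion,
\[
g(k)-g(k-1)=T_{k+1}-T_k-T_{k-1}=T_{k-2}\ge 0,
\]
so $g$ is non-decreasing for $k\ge 2$. Since $g(1)=T_2=1$ (the sum is empty), it follows that $g(k)\ge 1$ for every $k\ge 1$, which gives the first inequality with room to spare.

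For the second bullet, the recursion gives $T_{k+2}-T_{k+1}=T_k+T_{k-1}$, so the desired inequality is equivalent to $T_k+T_{k-1}>\sum_{i=1}^{k-1}T_i$, i.e.\ to $T_k>\sum_{i=1}^{k-2}T_i$. But this is exactly the first bullet with $k$ replaced by $k-1$, already established for $k-1\ge 1$. Hence the second inequality holds for all $k\ge 2$.

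I do not expect a real obstacle here; the proof is a short induction. The points that need care are purely bookkeeping: keeping every subscript of $T$ at least $0$ so the recursion is legitimate, and pinning down the precise range of $k$ that the Cookie Monster argument actually uses (it is $k\ge 2$; at $k=1$ the second inequality degenerates to $0>0$ and fails). As an alternative to the slack-function argument one can prove the first bullet by a direct induction, $\sum_{i=1}^{k-1}T_i=T_{k-1}+\sum_{i=1}^{k-2}T_i<T_{k-1}+T_k\le T_{k+1}$, but the form above makes the strictness self-evident.
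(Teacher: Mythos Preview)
Your argument is correct. The paper takes a different route: it proves the two inequalities by a \emph{simultaneous} induction in which each one feeds the other. Adding $T_k$ to the first inequality at level $k$ gives $T_{k+1}+T_k>\sum_{i=1}^{k}T_i$, and since $T_{k+1}+T_k=T_{k+3}-T_{k+2}$ this is exactly the second inequality at level $k+1$; conversely, adding $T_k$ to the second inequality at level $k$ and using $T_{k+2}-T_{k+1}+T_k\le T_{k+2}$ yields the first inequality at level $k+1$. So the paper's proof is an intertwined two-step induction, whereas you decouple the statements: you establish the first bullet on its own via the slack function $g(k)$, and then observe that the recursion collapses the second bullet to the first one shifted by one index. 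Your approach is a bit more transparent and makes the dependence one-directional rather than circular; the paper's has the charm of showing that the two inequalities are, in its words, ``intertwined.''

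One genuine bonus of your write-up: you flag that at $k=1$ the second inequality reads $T_3-T_2>0$, i.e.\ $0>0$, and fails. The paper asserts both base cases hold at $k=1$, which is not quite right for the second bullet; the Cookie Monster application only ever needs $k\ge 2$, so nothing downstream breaks, but your bookkeeping is sharper here.
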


\begin{proof}
Let's prove this by induction. For the base case $k = 1$, both inequalities are true. Suppose they are both true for some $k$. If $T_{k}$ is added to both sides of both inequalities' induction hypotheses, we get 
\begin{eqnarray*}
T_{k+1} + T_k > \Sigma_{i=1}^{k}T_i
\end{eqnarray*}
\begin{eqnarray*}
T_{k+2} - T_{k+1} + T_k > \Sigma_{i=1}^{k}T_i.
\end{eqnarray*}
But $T_{k+1} + T_k = T_{k+3} - T_{k+2}$ and $T_{k+2} - T_{k+1} + T_k < T_{k+3}$. Therefore, they are both true for $k+1$. Cookie Monster is surprised that the inequalities are intertwined: he can use one to prove the other and vice versa.
\end{proof}

Now Cookie Monster needs to prove the theorem.

\begin{proof}
Consider the largest three jars. The largest jar and the second largest jar each have more cookies than the remaining $k-3$ jars. That means there should be a move that includes the second largest and largest jars that doesn't touch the smallest $k-3$ jars. Because of the second inequality, and because we do not actually have $T_1$ and $T_2$ jars in the set, the largest jar needs one more move that doesn't touch the smallest $k-3$ jars to be discarded. As we need at least two moves to touch and discard the last three jars, discarding all three jars in two moves is the best we can do.
\end{proof}

\section{``Nacci" sounds Italian}

The reader might wonder what ``nacci" in the title might mean. It sounds a little bit like gnocchi, Italian dumplings. Sure, Cookie Monster likes gnoccis too, but he likes cookies better. And nacci sounds Italian because Fibonacci is from Italy.

Anyway, Cookie Monster's Mommy was happy that the Tribonacci sequence took Cookie Monster more moves than Fibonacci sequence. Is there a sequence that will take even more moves, but allows her to keep her promise? The next such sequence should probably be called Tetrabonacci. She googled it and found that there are no such numbers. But there is a Tetranacci sequence whose terms are sums of the previous four terms.  Why are mathematicians so illogical?

She realized that because her son knows addition and understands mathematics, he would figure out that the number of moves to consume Tetranaccis is about $3k/4$. Can she continue her trend forever? Can she find other sequences that require an increasing number of moves per jar, but still fewer moves than the total number of jars? She found Pentanacci numbers. Isn't Pentabonacci a more interesting name? Then there are Hexanacci numbers and Heptanacci numbers. The next sequence is Octanacci numbers, followed by Nonacci numbers.

Yes, she can torture her son forever. However, people ran out of unique names, so they call these sequences $n$-nacci series, and there is an infinite number of these sequences. The Cookie Monster number of the $n$-nacci series is of the order $\frac{(n-1)k}{n}$. It gets closer and closer to $k$, but it never reaches it.

Cookie Monster denotes the $n$-nacci sequence as $n_i$. Theoretically, we need to specify the first $n$ terms of an $n$-nacci sequence. For example, we can start a Tetranacci-like sequence with four zeros. Then the sequence becomes all zeros. This would be a very sad sequence as all the jars would be empty, and there would be a hungry and irritated Cookie Monster. So mathematicians agree to define nacci sequences as the lexicographically smallest nacci-like sequences. To put it in English, the first $n$ terms of an $n$-nacci sequence are: $n_0 = n_1 = \cdots = n_{n-2} = 0$ and $n_{n-1} = 1$. So the Tetranacci sequence starts as  	0, 0, 0, 1, 1, 2, 4, 8, 15, 29, 56, 108.

To prevent duplicate numbers of cookies, Cookie Monster's Mommy starts the sequence with $n_{n}$, so the set of $k$ jars is $\{n_{n}, \dots, n_{n+k-1}\}$.

Here is Cookie Monster's strategy for dealing with $n$-nacci sequences, which he calls \textit{cookie-monster-knows-addition}: He takes $n - 1$ moves to remove $j_{k-i}$ cookies from the $k$-th and $k-i$-th largest jars for each $i$ such that $0 < i < n $. In doing this, $n$ jars are emptied in $n - 1$ moves. This process can be repeated, until at most $n - 1$ elements remain, which he empties one by one.  Thus, when $S = \{n_{n}, \dots, n_{n+k-1}\}$, the Cookie Monster number $\text{CM}(S) = \lceil \frac{n-1}{n}(k-1) \rceil +1$.

Yes, Cookie Monster figured out how to consume naccis. 

Cookie Monster: ``Me want cookie!"

\section{Bibliography and Acknowledgements}\label{sec:acknowledgements}

Cookie Monster is proud that people study his cookie eating strategies. The problem first appeared in \textit{The Inquisitive Problem Solver} by Vaderlind Guy and Larson \cite{VGL}. The research was continued by Belzner \cite{B}, Bernardi and Khovanova \cite{BK} and Cavers \cite{C}. This paper contains the original research by the authors.

Cookie Monster and the authors are grateful to MIT-PRIMES program for supporting this research.

\end{document}